\newtheorem{thm}{Theorem}[section]
\newtheorem{lem}[thm]{Lemma}
\newtheorem{rem}[thm]{Remark}
\numberwithin{equation}{section}
\DeclareMathOperator{\erfc}{erfc}
\DeclareMathOperator{\sgn}{sgn}
\newcommand{\R}{{\mathbb{R}}}
\newcommand{\N}{{\mathbb{N}}}
\newcommand{\e}{{\varepsilon}}
\renewcommand{\epsilon}{{\varepsilon}}
\renewcommand{\t}{{\theta}}
\newcommand{\g}{{\gamma}}
\newcommand{\de}{{\partial}}
\begin{document}

\title[Quantitative lower bounds to the Cheeger constant]{Quantitative lower bounds to\\the Euclidean and the Gaussian\\ Cheeger constants}
\author{Vesa Julin}
\address{Jyv\"askyl\"an yliopisto, Department of Mathematics and Statistics, P.O.Box 35 (MaD), FI-40014 Jyv\"askyl\"a}
\email{vesa.julin@jyu.fi}
\author{Giorgio Saracco}
\address{Scuola Internazionale Superiore di Studi Avanzati (SISSA), via Bonomea 265, IT--34136 Trieste}
\email{gsaracco@sissa.it}
%
%


\subjclass[2020]{Primary: 49Q10. Secondary: 49Q20, 39B62} 

\keywords{Cheeger sets, Cheeger constant, quantitative inequalities}

\begin{abstract}
We provide a quantitative lower bound to the Cheeger constant of a set $\Omega$ in both the Euclidean and the Gaussian settings in terms of suitable asymmetry indexes. We provide examples which show that these quantitative estimates are sharp.
\end{abstract}
 \hspace{-2cm}
 {
 \begin{minipage}[t]{0.6\linewidth}
 \begin{scriptsize}
 \vspace{-3cm}
 This is a pre-print of an article published in \emph{Ann. Fenn. Math.}. The final authenticated version is available online at: \href{https://doi.org/10.5186/aasfm.2021.4666}{https://doi.org/10.5186/aasfm.2021.4666}
 \end{scriptsize}
\end{minipage} 
}

\maketitle

\section{Introduction}

In the past years inequalities of geometric-functional type have been widely studied in the literature, a --- far from complete  ---  list is~\cite{FMP10, Fug89, FJ14, FMP08} (isoperimetric inequalities),~\cite{EFT05, Neu16} (anisotropic Wulff inequalities),~\cite{BBJ17, BJ17, CFMP11} (Gaussian inequalities),~\cite{FLu, FP19, HN94} (Riesz inequalities),~\cite{Cia06, Cia08, CFMP09, FMP07, Neu20} (Sobolev inequalities),~\cite{BdPV15, Fra08, FMP09} (Faber--Krahn inequalities, see also~\cite{BdP17} for an account on other quantitative spectral inequalities).\par

In this paper we are interested in providing quantitative estimates on the \emph{Cheeger constant} both in the Euclidean and in the Gaussian setting. Given any open set $\Omega$, of finite, resp.~Euclidean or Gaussian, measure, the constant is defined as, resp.,
\begin{equation}\label{eq:Cc}
h(\Omega):= \inf \left\{\,\frac{P(E)}{|E|}\, \right\},\qquad h_\gamma(\Omega):= \inf \left\{\,\frac{P_\gamma(E)}{\gamma(E)}\right\},
\end{equation}
where the infima are taken among subsets $E\subset \Omega$ of positive,  resp.~Euclidean or Gaussian, measure. In~\eqref{eq:Cc}, we denote by $P(\cdot)$ and $P_\gamma(\cdot)$, resp., the distributional Euclidean and Gaussian perimeter, while by $|\cdot|$ and $\gamma(\cdot)$, resp., the standard Lebesgue and Gaussian measure.\par

Sets attaining the above infima are known to exist and are called \emph{Cheeger sets}, see for instance~\cite{Leo15, Par11} for the Euclidean case, and~\cite{CMN10} for the Gaussian case (more general settings have been explored, see for instance~\cite{BLP14, LT19, Sar18}). The task of computing the constant and determining the shape of Cheeger sets is usually referred to as the \emph{Cheeger problem}. The constant first appeared in~\cite{Che70} in a Riemannian setting as a mean to bound from below the first Dirichlet eigenvalue of the Laplace--Beltrami operator. Through the coarea formula it can be proven that the Euclidean constant $h(\Omega)$ provides a lower bound to the first Dirichlet eigenvalue of the Laplace operator, and analogously the Gaussian constant $h_\gamma(\Omega)$ to the first Dirichlet eigenvalue of the Ornstein--Uhlenbeck operator (i.e.,~$-\Delta(\cdot) + \langle x, \nabla (\cdot) \rangle$).\par 

Since then, the Euclidean problem has been widely studied and it has  appeared in many different contexts, such as capillarity problems~\cite{Giu78, LS18a}, spectral properties of the $p$-Laplacian~\cite{KF03}, and landslide modeling~\cite{ILR05} to name a few. The interested reader is referred to the surveys~\cite{Leo15, Par11} and the references therein. The Gaussian counterpart was studied in relation to the prescribed mean curvature problem~\cite{CMN10} and to image processing~\cite{CFM09}.\par

It is of interest to provide estimates on the constants as there are no available formulas to directly compute them, but for very few classes of sets $\Omega$ limited to the Euclidean $2$-dimensional setting, see~\cite{KL06, LNS17, LP16, LS19, Sar19}. In higher dimensions, some very special cases are treated in~\cite{BP19, BE11, Dem04, KLV19}. To give an upper bound to $h(\Omega)$ (resp., $h_\gamma(\Omega)$) it is enough to compute the ratio $P(E)/|E|$ (resp., $P_\gamma(E)/\gamma(E)$) for any competitor $E$, while establishing lower bounds exploits the relevant isoperimetric inequalities. Indeed, first, rough estimates on the constants are provided by
\begin{equation}\label{eq:est_iso_in}
h(\Omega) \ge h(B_\Omega), \qquad h_\gamma(\Omega) \ge h_\gamma(H_\Omega),
\end{equation}
where by $B_\Omega$ we denote the ball centered at the origin s.t.~$|\Omega|=|B_\Omega|$, and by $H_\Omega$ any halfspace s.t.~$\gamma(\Omega)=\gamma(H_\Omega)$.\par

The isoperimetric inequalities have been proven in quantitative forms by establishing bounds through asymmetry indexes measuring the distance (in some suitable sense) of $\Omega$ from the isoperimetric set with the same measure. Then, it is natural to wonder if any improvement to~\eqref{eq:est_iso_in} can be attained by exploiting these stronger inequalities and get lower bounds of the form
\begin{equation}\label{eq:obj}
\frac{h(\Omega)-h(B_\Omega)}{h(B_\Omega)} \ge c(n)\iota(\Omega)
\end{equation}
in the Euclidean case, and
\begin{equation}\label{eq:obj2}
h_\gamma(\Omega) - h_\gamma(H_\Omega) \ge c(\gamma(\Omega))\iota_\gamma(\Omega)
\end{equation}
in the Gaussian case, where $\iota(\Omega)$ and $\iota_\gamma(\Omega)$ are some suitable asymmetry indexes. These inequalities  give an improved lower bound on the Cheeger constant for sets that are near the corresponding isoperimetric set.

We remark that there are two main differences in the Euclidean ~\eqref{eq:obj} and the Gaussian~\eqref{eq:obj2} inequalities that we expect to prove. First, in the Euclidean case, the quantitative estimate is renormalized, while in the Gaussian it is not. This is so because the respective quantitative isoperimetric estimates are renormalized in the former setting and not in the latter. Second, the constant $c$ in the Euclidean case depends only on the dimension $n$, while in the Gaussian only on the measure of $\Omega$. Again, this is a known feature of the quantitative isoperimetric inequalities in the two different settings.\par

A first result in this direction has been proven in the Euclidean case in~\cite{FMP09} with $\iota(\Omega)=\alpha^3(\Omega)$, where $\alpha$ is known as the Fraenkel asymmetry index. This was later refined in~\cite{FMP09a}, with the exponent $2$ in place of $3$, i.e., $\iota(\Omega)=\alpha^2(\Omega)$, and this exponent is known to be sharp, i.e., no such inequality can hold with a smaller power of $\alpha$. Our first main result, Theorem~\ref{thm:zeta}, states  that in the Euclidean case a stronger quantitative inequality holds, where the index is given by the Riesz asymmetry index. To the best of our knowledge, there are no previous results in the Gaussian case. In Theorem~\ref{thm:gauss_alpha} we prove a sharp quantitative inequality of type  \eqref{eq:obj2} in terms of the Gaussian Fraenkel asymmetry. Then we consider a different index in terms of the barycenter, which was  introduced in~\cite{BBJ17, Eld15}, and prove a sharp quantitative inequality in terms of this in Theorem~\ref{thm:gauss_logbeta}. Rather surprisingly the optimal dependence on the asymmetry in  Theorem~\ref{thm:gauss_logbeta} is different than in the quantitative Gaussian isoperimetric inequality of~\cite{BBJ17} and it has a logarithmic dependence on the barycenter index as in~\cite{Eld15}. \par 

The paper is organized into two independent sections.  In Section~\ref{sec:Eucl} we study  the Euclidean case \eqref{eq:obj} and  in Section~\ref{sec:Gaus} the Gaussian one \eqref{eq:obj2}. Each section is self-contained and begins with relevant definitions, related isoperimetric inequalities and statements of the results. The proof of the theorems follows. Finally, each section is complemented with an example that shows that the  quantitative inequalities are sharp. 

\section{Estimates in the Euclidean setting}\label{sec:Eucl}

In the Euclidean setting there are three quantitative isoperimetric inequalities available, in terms of the following three indexes:
\begin{align}
\alpha(\Omega)&:=\min_{y\in \R^n}\left\{\,\frac{|\Omega \Delta (B_\Omega+y)|}{|\Omega|}\, \right\}, \label{eq:def_alpha}\\
\zeta(\Omega) &:= \frac{1}{P(B_\Omega)}\left (\int_{B_\Omega}\frac{\textrm{d}x}{|x|} - \max_{y\in \R^n}\int_\Omega \frac{\textrm{d}x}{|x-y|} \right)\label{eq:def_zeta}\\
\beta(\Omega)&:= \min_{y\in \R^n} \left\{\,  \left( \frac{1}{2P(B_\Omega)}\int_{\de^* \Omega}|\nu_\Omega(x) - \nu_{B_\Omega+y}(\pi_{y,\Omega}(x))|^2  \, \textrm{d}\mathcal{H}^{n-1}(x)\right)^{\frac 12}\, \right\},\label{eq:def_beta}
\end{align}
where  $\pi_{y,\Omega}(x)$ is the projection of $\R^n\setminus \{y\}$ on the boundary of $B_\Omega+y$, i.e.,
\begin{align*}
\pi_{y,\Omega}(x) := y + r\frac{x-y}{|x-y|}\,, && \forall x\neq y,
\end{align*}
being $r$ the radius of $B_\Omega$. Indeed, one has that there exists a constant $c=c(n)$ depending only on the dimension (which changes from line to line) such that
\begin{align}
\frac{P(\Omega)-P(B_\Omega)}{P(B_\Omega)}&\ge c\, \alpha^2(\Omega),\label{eq:iso_ine_alpha}\\
\frac{P(\Omega)-P(B_\Omega)}{P(B_\Omega)}&\ge c\, \zeta(\Omega),\label{eq:iso_ine_zeta}\\
\frac{P(\Omega)-P(B_\Omega)}{P(B_\Omega)}&\ge c\, \beta^2(\Omega).\label{eq:iso_ine_beta}
\end{align}
Inequality~\eqref{eq:iso_ine_alpha} was proven in~\cite{FMP10, FMP08}, while inequalities\footnote{We remark that inequality~\eqref{eq:iso_ine_zeta} is not explicitly stated, but it is contained in the proof of~\cite[Proposition~1.2]{FJ14}.}~\eqref{eq:iso_ine_zeta} and~~\eqref{eq:iso_ine_beta} in~\cite{FJ14}, and the exponents are known to be sharp. The interested reader is referred to the beautiful survey~\cite{Fus15}. Inequalities~\eqref{eq:iso_ine_zeta} and~\eqref{eq:iso_ine_beta} are subsequent refinements of~\eqref{eq:iso_ine_alpha}, in the sense that the indexes $\beta$ and $\zeta$ are better than $\alpha$, i.e.,~one has 
\[
\beta^2(\Omega) \gtrsim \zeta(\Omega) \gtrsim \alpha^2(\Omega)
\]
for any set of finite perimeter $\Omega$.

Inequality~\eqref{eq:iso_ine_alpha} has been successfully used to give a quantitative estimate on the Cheeger constant in~\cite{FMP09a} in terms of the Fraenkel asymmetry $\alpha$, defined above in~\eqref{eq:def_alpha}.  It is then natural to wonder whether the inequalities~\eqref{eq:iso_ine_zeta} and~\eqref{eq:iso_ine_beta} have an analogous counterpart in terms of the Cheeger constant. Our first main result states that  this is indeed true for the index $\zeta$.

\begin{thm}\label{thm:zeta}
Let $\Omega$ be an open, bounded set in $\R^n$. There exists a dimensional constant $c=c(n)$ such that
\begin{equation}\label{eq:che_ine_zeta}
\frac{h(\Omega) - h(B_\Omega)}{h(B_\Omega)} \ge  c\,\zeta(\Omega),
\end{equation}
where $\zeta(\cdot)$ is defined in~\eqref{eq:def_zeta}. 
\end{thm}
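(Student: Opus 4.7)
The plan is as follows. Let $E \subset \Omega$ be a Cheeger set of $\Omega$, so that $h(\Omega) = P(E)/|E|$, and by scaling assume $|\Omega| = |B_\Omega|$ is fixed to $|B_1|$, so that $r_\Omega = 1$ and $r_E = t^{1/n}$, where $t := |E|/|\Omega| \in (0,1]$. Using $P(B_r)/|B_r| = n/r$ one computes
\[
\frac{h(\Omega)}{h(B_\Omega)} = \frac{P(E)\,|B_\Omega|}{|E|\,P(B_\Omega)} = \frac{P(E)}{P(B_E)}\,t^{-1/n},
\]
so, combining with~\eqref{eq:iso_ine_zeta} applied to $E$ and discarding the factor $t^{-1/n} \ge 1$ that multiplies the $\zeta$-term on the right,
\[
\frac{h(\Omega) - h(B_\Omega)}{h(B_\Omega)} \ge (t^{-1/n} - 1) + c\,\zeta(E).
\]

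The argument then splits according to how much $E$ fills $\Omega$. Fix a threshold $t_0 = t_0(n) \in (0,1)$ to be chosen. In the regime $t \le t_0$, the term $t^{-1/n} - 1$ is bounded below by a positive dimensional constant while $\zeta(\Omega) \le \int_{B_1} |x|^{-1}\,dx$ is bounded above by a dimensional constant; the desired inequality then follows at once from the display above by taking $c = c(n)$ suitably small.

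The delicate case is $t \in (t_0, 1]$, where $E$ nearly fills $\Omega$ and one has to transfer the $\zeta$-information from $E$ back to $\Omega$. To this end, let $y_E$ be a maximizer in the definition of $\zeta(E)$. Since the supremum defining $\zeta(\Omega)$ ranges over all $y \in \R^n$ and the integrand is nonnegative,
\[
\zeta(\Omega) \le \int_{B_\Omega} \frac{dx}{|x|} - \int_E \frac{dx}{|x - y_E|} = \zeta(E) + \int_{B_\Omega \setminus B_E} \frac{dx}{|x|} = \zeta(E) + C_n\bigl(1 - t^{(n-1)/n}\bigr),
\]
for a dimensional constant $C_n$. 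Inserting this into the previous lower bound reduces the claim to the one-variable inequality $t^{-1/n} - 1 \ge c'\bigl(1 - t^{(n-1)/n}\bigr)$ for every $t \in (t_0, 1]$, which holds since the ratio between the two sides extends continuously to $1/(n-1) > 0$ as $t \to 1$ and is therefore bounded below on the compact interval $[t_0, 1]$ by a positive dimensional constant.

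The main obstacle is precisely the comparison step between $\zeta(E)$ and $\zeta(\Omega)$: a priori their optimal translation centers are unrelated, and the small trick of testing the definition of $\zeta(\Omega)$ against the optimal center $y_E$ of $E$ is what bridges them, at the cost of an error controlled by the measure deficit $1 - t$. Once this comparison and the splitting above are in place, the remainder of the argument is routine balancing and the use of the quantitative isoperimetric inequality~\eqref{eq:iso_ine_zeta}.
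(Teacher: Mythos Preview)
Your proof is correct and follows essentially the same route as the paper: after scaling, both arguments use the key comparison $\zeta(\Omega)\le\zeta(E)+\int_{B_\Omega\setminus B_E}|x|^{-1}\,dx$ (obtained by testing the max in $\zeta(\Omega)$ at the optimal center of $E$), invoke~\eqref{eq:iso_ine_zeta} to control $\zeta(E)$ by the isoperimetric deficit of $E$, and bound the remaining annular integral $C_n(1-t^{(n-1)/n})$ by $t^{-1/n}-1=\frac{h(\Omega)-h(B_\Omega)}{h(B_\Omega)}-\bigl(\frac{P(E)}{P(B_E)}-1\bigr)t^{-1/n}\le \frac{h(\Omega)-h(B_\Omega)}{h(B_\Omega)}$. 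The only cosmetic difference is the case split: the paper disposes of $h(\Omega)\ge 2h(B_\Omega)$ trivially, whereas you split on $t\le t_0$; in fact neither split is needed, since the substitution $s=t^{1/n}$ gives $(t^{-1/n}-1)/(1-t^{(n-1)/n})=1/\bigl(s(1+s+\cdots+s^{n-2})\bigr)\ge 1/(n-1)$ for all $t\in(0,1]$.
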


We remark that~\eqref{eq:che_ine_zeta} is sharp, and this follows because~\eqref{eq:iso_ine_zeta} is sharp. In Section~\ref{ssec:beta_not} we give an example that shows that  a quantitative inequality analogous to~\eqref{eq:che_ine_zeta} does not hold with the index $\beta^2$ in place of the index $\zeta$. We remark that the index $\zeta$  is used, e.g.,~in~\cite{Jul14} to prove the minimality of the ball in Gamow's liquid drop model for small masses.   \par

We also remark that a similar analysis can be carried over when considering the $m$-Cheeger sets studied in~\cite{PS17} wherein the ratio defining $h(\Omega)$ one considers suitable powers $m$ of the volume rather than the power $1$.

\subsection{Proof of Theorem~\ref{thm:zeta}}\label{ssec:zeta_ok}

First, thanks to the scaling property of the Euclidean Cheeger constant, i.e.,~$h(\lambda \Omega) = \lambda^{-1}h(\Omega)$ for $\lambda>0$, it is enough to prove the inequality for $\Omega$ s.t.~$|\Omega|=\omega_n$, i.e.,~$B_\Omega$ is the unit ball.\par

Second, notice that
\begin{equation}\label{eq:potential}
\int_{B_\Omega} \frac{\textrm{d}x}{|x|} = n\omega_n  \int_0^1 \frac{\rho^{n-1}}{\rho}\,\textrm{d}\rho = \frac{P(B_\Omega)}{n-1},
\end{equation}
thus $\zeta(\Omega)\le (n-1)^{-1}$. Therefore, the inequality immediately follows for sets $\Omega$ s.t.~$h(\Omega) \ge 2h(B_\Omega)$, by choosing $c(n)\le2$.\par

Hence, let us consider $\Omega$ with volume $\omega_n$ s.t.~$h(\Omega) < 2h(B_\Omega)$, and denote by $E$ a Cheeger set of $\Omega$. We begin by estimating $\zeta(\Omega)$ in terms of $\zeta(E)$. Up to translating $E$ (and therefore $\Omega$) we  may  assume that $E$ is ``centered''
\[
P(B_E)\zeta(E) = \int_{B_E}\frac{\textrm{d}x}{|x|} - \int_{E}\frac{\textrm{d}x}{|x|},
\]
i.e.,~the maximum in~\eqref{eq:def_zeta} is attained at  the origin $y=0$. By~\eqref{eq:potential}, adding and removing $(n-1)^{-1}P(B_E)$, using the positivity of the integrands and the fact that $E\subset \Omega$ to estimate $-\max_y \int_\Omega |x-y|^{-1}\mathrm{d}x$, dividing by $P(B_\Omega)$ and owing to the fact that $P(B_E)\le P(B_\Omega)$ we have
\begin{align}\label{eq:eq1}
\zeta(\Omega) 
&= \frac{1}{n-1} -\frac{1}{P(B_\Omega)}\max_{y\in \R^n}\int_\Omega \frac{\textrm{d}x}{|x-y|}
\nonumber
\\
&\le \frac{1}{n-1}\frac{P(B_\Omega)-P(B_E)}{P(B_\Omega)}  +\zeta(E).
\end{align}
We aim to bound both terms on the RHS through the renormalized difference of the Cheeger constants $(h(\Omega)-h(B_\Omega))h(B_\Omega)^{-1}$, up to some dimensional constant $c=c(n)$. This would conclude the proof.\par
To estimate the first term on the RHS  of \eqref{eq:eq1}, we exploit the isoperimetric inequality to deduce
\begin{equation*}
h(\Omega) = \frac{P(E)}{|E|} \ge \frac{P(B_E)}{|B_E|} = h(B_E) = \left(\frac{|\Omega|}{|E|} \right)^{\frac 1n}h(B_\Omega),
\end{equation*}
where the last equality follows from the scaling properties of $h(\cdot)$. By employing equality $P(B)=n\omega_n^{\frac 1n}|B|^{\frac{n-1}{n}}$ valid for any ball $B$, the above inequality, and recalling that $|\Omega| = \omega_n$, we obtain 
\begin{align*}
\frac{1}{n-1}\frac{P(B_\Omega)-P(B_E)}{P(B_\Omega)} 
&
=
\frac{1}{n-1}\left( 1 - \left( \frac{|E|}{|\Omega|}\right)^{\frac{n-1}{n}} \right)
\\
&
\le 
\frac{1}{n-1}\frac{h(\Omega)^{n-1}-h(B_\Omega)^{n-1}}{h(\Omega)^{n-1}}.
\end{align*}
As $t^a -s^a \le a t^{a-1}(t-s)$ whenever $a\ge 1$, and $s\in (0,t]$, and recalling that $h(\Omega) \ge h(B_\Omega)$ we finally get
\begin{equation}\label{eq:eq2}
\frac{1}{n-1}\frac{P(B_\Omega)-P(B_E)}{{P(B_\Omega)}} \le \frac{h(\Omega) - h(B_\Omega)}{h(\Omega)}.
\end{equation}

We are left with providing an estimate to $\zeta(E)$. In order to do so, recall that $|\Omega|=|B_\Omega|=\omega_n$, and $h(B_\Omega)=n$. Thus, the following chain of equalities holds
\begin{align*}
|B_\Omega|^{-\frac 1n}\frac{h(\Omega)-h(B_\Omega)}{h(B_\Omega)} &= \frac{1}{n |B_\Omega|^{\frac 1n}}\frac{P(E)}{|E|} -  |B_\Omega|^{-\frac 1n}\\
&= \frac{P(E)}{n |B_\Omega|^{\frac 1n}|E|^{\frac{n-1}{n}}}|E|^{-\frac 1n} - |B_\Omega|^{-\frac 1n}\\
&=\frac{P(E)-P(B_E)}{P(B_E)}  |E|^{-\frac 1n} + \left( |E|^{-\frac 1n} - |B_\Omega|^{-\frac 1n}\right )\\
&\ge \frac{P(E)-P(B_E)}{P(B_E)}  |E|^{-\frac 1n},
\end{align*}
where the last inequality follows from $E\subset \Omega$. Therefore by using~\eqref{eq:iso_ine_zeta} and the above inequality we get
\begin{equation}\label{eq:eq3}
\zeta(E) \le c(n)  \frac{P(E)-P(B_E)}{P(B_E)} \le  c(n)\frac{h(\Omega)-h(B_\Omega)}{h(B_\Omega)} \left(\frac{|E|}{|B_\Omega|} \right)^{\frac 1n}.
\end{equation}
As $|E|\le |B_\Omega|$, combining~\eqref{eq:eq1} with~\eqref{eq:eq2} and~\eqref{eq:eq3} yields the  claim.

\subsection{Failure of the inequality with the index \texorpdfstring{$\beta^2$}{beta^2}}\label{ssec:beta_not}

In this section, we show that there is no constant $c=c(n)$ such that inequality
\begin{equation}\label{eq:failure_beta_eucl}
\frac{h(\Omega) - h(B_\Omega)}{h(B_\Omega)}\ge c\, \beta^2(\Omega)
\end{equation}
holds true, by building a suitable family of sets. Let us fix $\e<<1$ and consider the family of bounded sets $\{\Omega_j\}_{j\in \N}$, where the boundary of $\Omega_j$ is given by the closed, simple polar curve
\begin{align*}
f_j(\t) = \left(1 +\frac{\e^2}{2}\right)^{-\frac 12} (1+\e \sin(2j\t)), && \t\in [0, 2\pi],
\end{align*}
some of which are depicted in Figure~\ref{fig:Omega_n}. 
 \begin{figure}[t]
    \centering
    \begin{subfigure}{0.32\linewidth}
    \centering
       		\includegraphics[width=1\linewidth]{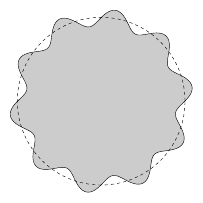}
               \caption{$j=5$}
        \label{fig:a}
    \end{subfigure}
    \begin{subfigure}{0.32\linewidth}
    \centering
       	\includegraphics[width=1\linewidth]{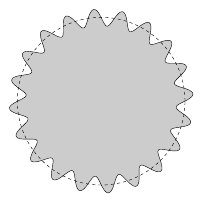}
        \caption{$j=10$}
        \label{fig:b}
    \end{subfigure}
   \begin{subfigure}{0.32\linewidth}
   \centering
       \includegraphics[width=1\linewidth]{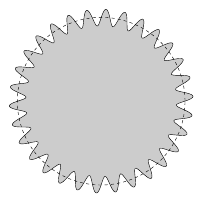}
        \caption{$j=15$}
        \label{fig:c}
    \end{subfigure}
    \caption{The set $\Omega_j$, for different values of $j$, with the choice $\varepsilon=10^{-1}$. The dashed line represents the unit ball.}
    \label{fig:Omega_n}
    \end{figure}
The volume of $\Omega_j$ is
\begin{align*}
|\Omega_j| &= \frac 12 \int_0^{2\pi} f^2_j(\t)\, \textrm{d}\t \\
 &= \frac 12 \left(1 +\frac{\e^2}{2}\right)^{-1} \int_0^{2\pi} 1+2\e \sin(2j\t) + \e^2 \sin^2(2j\t)\,\textrm{d}\t \\
 &=  \frac 12 \left(1 +\frac{\e^2}{2}\right)^{-1} (2\pi + \e^2 \pi) = \pi.
\end{align*}
The perimeter of $\Omega_j$ can be easily estimated from below as
\begin{align*}
P(\Omega_j) &=  \int_0^{2\pi} \sqrt{f^2_j(\t)+(f^\prime_j(\t))^2}\, \textrm{d}\t \\
 &= \left(1 +\frac{\e^2}{2}\right)^{- \frac 12} \int_0^{2\pi} \sqrt{(1+ \e \sin(2j\t))^2 + (2j\e \cos(2j\t))^2}\,\textrm{d}\t \\
 &\ge  2j\e \left(1 +\frac{\e^2}{2}\right)^{- \frac 12} \int_0^{2\pi} | \cos(2j\t)|\,\textrm{d}\t  = 8 j\e \left(1 +\frac{\e^2}{2}\right)^{- \frac 12}.
\end{align*}
As we let $j \to \infty$, we see that $P(\Omega_j) \to \infty$. Thus, all the sets $\Omega_j$ have the volume of the unit ball $B_1$, while their perimeter diverges.\par

Moreover, the sets $\Omega_j$ contain the ball $B_{\min_\t f_j(\t)}$, where such a radius is readily computed to be
\[
\min_\t f_j(\t) = \left(1 +\frac{\e^2}{2}\right)^{-\frac 12} (1-\e).
\]
Therefore $h(B_{\min_\t f_j(\t)})$ provides an upper bound to $h(\Omega_j)$ and  $h(B_1)= 2$ a lower bound, i.e., 
\[
\frac{2}{1-\e} \left(1 +\frac{\e^2}{2}\right)^{\frac 12} \ge h(\Omega_j) \ge 2.
\]
Note that, by recalling the definition~\eqref{eq:def_beta} of $\beta$, we may write  $\beta^2(\Omega)$ as 
\begin{align*}
P(B_\Omega)\beta^2(\Omega) &= \frac{1}{2}\min_{y\in \R^n} \int_{\de^* \Omega} 2 - 2 \nu_\Omega(x) \cdot  \nu_{B_\Omega(y)}(\pi_{y,\Omega}(x)) \, \textrm{d}\mathcal{H}^{n-1}(x)  \\
&= P(\Omega) - \max_{y\in \R^n} \int_{\de^* \Omega} \nu_\Omega(x) \cdot \frac{x-y}{|x-y|} \, \textrm{d}\mathcal{H}^{n-1}(x)  \\
&= P(\Omega) - (n-1)\,\max_{y\in \R^n} \int_\Omega \frac{\textrm{d}x}{|x-y|}   \\
&=  P(\Omega)-P(B_\Omega) + (n-1) P(B_\Omega)\zeta(\Omega).
\end{align*}
Thus, the inequality~\eqref{eq:failure_beta_eucl} cannot hold for all $j$ for any choice of $c(n)$, as $h(\Omega_j)$ is uniformly bounded from above and the oscillation index
\[
\beta^2(\Omega_j) \ge (P(\Omega_j)-P(B_1))\cdot P(B_1)^{-1}
\]
diverges as $j\to +\infty$.

\begin{rem}\label{rem:annulus}
We may construct easier examples, if we do not require the competing sets to be starshaped. Consider now the family of bounded sets $\{\Omega_j\}_{j \in \mathbb{N}}$ with
\[
\Omega_j := B_{1-\frac 1j} \cup A_{1, 1+\e(j)},
\]
where $A_{1, 1+\e(j)}$ is the annulus centered at the origin with inner radius $1$ and outer radius $1+\e(j)$, with $\e(j)$ such that $|\Omega_j|=|B_1|$, for all $j$. One of these sets is depicted in Figure~\ref{fig:annulus_a}. It is immediate to check that $h(\Omega_j)\to h(B_1)$, while at the same time $\beta^2(\Omega_j)\ge 2$. Thus the LHS of~\eqref{eq:failure_beta_eucl} goes to zero, while the RHS is uniformly strictly greater than zero. Notice that the sets of this family can be easily modified to ensure that they  are all connected, see Figure~\ref{fig:annulus_b}.
\end{rem}

\begin{figure}[t]

    \centering
    \begin{subfigure}{0.48\linewidth}
    \centering
       		\begin{tikzpicture}
\filldraw[black!20!white] (0,0) circle (2);
\fill[white] (0,0) circle (1.85);
\fill[black!20!white] (0,0) circle (1.65);
\draw[black] (0,0) circle (1.65);
\draw[black] (0,0) circle (1.85);
\draw[black] (0,0) circle (2);
	\end{tikzpicture}
               \caption{The non-connected set}
        \label{fig:annulus_a}
    \end{subfigure}
    \begin{subfigure}{0.48\linewidth}
    \centering
       	\begin{tikzpicture}
\filldraw[black!20!white] (0,0) circle (1.95);
\fill[white] (0,0) circle (1.85);
\fill[black!20!white] (0,0) circle (1.65);
\draw[black] (0,0) circle (1.65);
\draw[black] (0,0) circle (1.85);
\draw[black] (0,0) circle (1.95);
\fill[black!20!white] (-0.1, 1.63) rectangle (0.1, 1.87);
\draw[black] (-0.1, 1.64) -- (-0.1, 1.8542);
\draw[black] (0.1, 1.64) -- (0.1, 1.8542);	
\fill[black!20!white] (-0.1, -1.63) rectangle (0.1, -1.87);
\draw[black] (-0.1, -1.64) -- (-0.1,-1.8542);
\draw[black] (0.1, -1.64) -- (0.1, -1.8542);
	\end{tikzpicture}
        \caption{The connected set}
        \label{fig:annulus_b}
    \end{subfigure}	
    \caption{The grayed-out sets represent one of the $\Omega_j$ introduced in Remark~\ref{rem:annulus}, and its connected, symmetric counterpart.\label{fig:annulus}}
\end{figure}
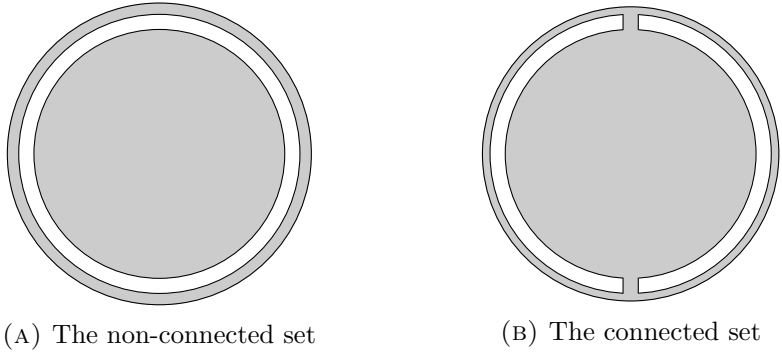

\section{Estimates in the Gaussian setting}\label{sec:Gaus}

Given a set of locally finite perimeter $E\subset \mathbb{R}^n$, we define  its Gaussian perimeter and volume to be
\[
P_\gamma(E) :=  \frac{1}{(2\pi)^{\frac {n-1}{2}}}\int_{\partial^* E} e^{-\frac{|x|^2}{2}}\, \textrm{d}\mathcal{H}^{n-1}(x), \qquad \gamma(E):=  \frac{1}{(2\pi)^{\frac n2}}\int_{E} e^{-\frac{|x|^2}{2}}\, \textrm{d}x.
\]
Given any direction $\omega \in \mathbb{S}^{n-1}$ and any real number $s\in \mathbb{R}$ we denote by $H_{s,\omega}$ the halfspace $\{\,x\in \mathbb{R}^n: x\cdot \omega < s\,\}$. We denote by $\phi$ the function
\[
\phi(s):= \frac{1}{\sqrt{2\pi}}\int_{-\infty}^s e^{-\frac{t^2}{2}}\, \textrm{d}t,
\]
and have that for any direction $\omega \in \mathbb{S}^{n-1}$ it holds
\[
P_\gamma(H_{s, \omega}) = e^{-\frac{s^2}{2}}, \qquad \gamma(H_{s, \omega})=\phi(s).
\]
If the direction $\omega$ is not relevant, we shall drop it and write $H_s$ as a shorthand for any halfspace of measure $\phi(s)$. Moreover, given any set $E$, we denote by $H_E$ any halfspace such that $\gamma(E)=\gamma(H_E)$.  If the direction is relevant  we denote it by $H_{E, \omega}$. The Gaussian isoperimetric inequality states that
\begin{equation}\label{eq:gauss_iso}
P_\gamma(E) \ge P_\gamma(H_E),
\end{equation}
with equality if and only if $E$ is a halfspace, see for instance~\cite{Bor75, CK01, SC74}. Analogously to the Euclidean case, quantitative versions of~\eqref{eq:gauss_iso} have been proven, namely there exists a positive constant $c=c(\gamma(E))$ depending only on the measure of $E$ (which changes from line to line) such that
\begin{align}
P_\gamma(E) - P_\gamma(H_E) \ge c\, \alpha_\gamma^2(E), \label{eq:gauss_iso_alpha}\\
P_\gamma(E) - P_\gamma(H_E) \ge c\, \beta_\gamma(E),  \label{eq:gauss_iso_beta}
\end{align}
where the indexes $\alpha_\gamma$ and $\beta_\gamma$ are given by
\begin{align}
\alpha_\gamma(E) &:= \min_{\omega\in \mathbb{S}^{n-1}} |E\Delta H_{E,\omega}|, \label{eq:def_alpha_gauss}\\ 
\beta_\gamma(E) &:= \min_{\omega\in \mathbb{S}^{n-1}} |b(H_{E,\omega}) - b(E)|,\label{eq:def_beta_gauss}
\end{align}
where $b(E)$ is the non-renormalized barycenter of $E$, i.e.,
\[
b(E) := \frac{1}{(2\pi)^\frac{n}{2}}\int_E x e^{-\frac{|x|^2}{2}}\mathrm{d}x.
\]
It is easy to see  that $|b(E)|$ is maximized by the halfspace $H_E$,  i.e., $|b(E)| \leq |b(H_E)|$,  and $s \mapsto |b(H_s)|$ attains its  maximum  at  $s=0$  with $|b(H_0)|=(2\pi)^{-\frac 12}$. Moreover we note that  $\beta_\gamma(E) \le 1$ for every set $E$. For an account of these facts, we refer the reader to~\cite{BBJ17}, where also the inequalities \eqref{eq:gauss_iso_alpha} and \eqref{eq:gauss_iso_beta} are proven (see also \cite{CFMP11, Eld15}). As in the Euclidean case the index $\beta_\gamma$ is stronger than $\alpha_\gamma$, in the sense that 
\[
\beta_\gamma(E) \gtrsim \alpha_\gamma^2(E)
\]
for every measurable set $E\subset \R^n$.  \par

The two following theorems are the main results of this section and they are proven respectively in Section~\ref{ssec:gauss_alpha} and Section~\ref{ssec:gauss_logbeta}.

\begin{thm}
\label{thm:gauss_alpha}
Let $\Omega$ be an open set in $\R^n$. There exists a constant $c=c(\gamma(\Omega))$ such that
\begin{equation}\label{eq:che_gaus_alpha}
h_\gamma(\Omega) - h_\gamma(H_\Omega) \geq c \, \alpha_\gamma^2(\Omega),
\end{equation}
where $\alpha_\gamma(\cdot)$ is defined in~\eqref{eq:def_alpha_gauss}.
\end{thm}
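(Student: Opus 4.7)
The plan is to adapt the Euclidean strategy of Section~\ref{ssec:zeta_ok}, using a Cheeger set $E\subset\Omega$ as a bridge and applying the quantitative Gaussian isoperimetric inequality~\eqref{eq:gauss_iso_alpha} to $E$. Since $\alpha_\gamma(\Omega)\le 2\gamma(\Omega)\le 2$, the statement is trivial (up to enlarging $c$) whenever $h_\gamma(\Omega)-h_\gamma(H_\Omega)\ge h_\gamma(H_\Omega)$, so we may restrict to the regime $h_\gamma(\Omega)\le 2 h_\gamma(H_\Omega)$. Let $E$ be a Cheeger set of $\Omega$ (existing by~\cite{CMN10}) and $H_E$ a halfspace with $\gamma(H_E)=\gamma(E)$. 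A direct computation shows that $f(s):=e^{-s^2/2}/\phi(s)=h_\gamma(H_s)$ is strictly decreasing on $\mathbb{R}$, whence by the Gaussian isoperimetric inequality and $\gamma(E)\le\gamma(\Omega)$,
\[
h_\gamma(\Omega)=\frac{P_\gamma(E)}{\gamma(E)}\ge \frac{P_\gamma(H_E)}{\gamma(H_E)}=h_\gamma(H_E)\ge h_\gamma(H_\Omega).
\]
Combining the assumption with $h_\gamma(H_E)\le h_\gamma(\Omega)\le 2 h_\gamma(H_\Omega)$ and the monotonicity of $f$ yields a uniform lower bound $\gamma(E)\ge c_0(\gamma(\Omega))>0$.

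Next, I would split the gap as
\[
h_\gamma(\Omega)-h_\gamma(H_\Omega) \;=\; \frac{P_\gamma(E)-P_\gamma(H_E)}{\gamma(E)} \;+\; \bigl(h_\gamma(H_E)-h_\gamma(H_\Omega)\bigr).
\]
The first term is bounded below by $c(\gamma(E))\alpha_\gamma^2(E)/\gamma(E)$ via~\eqref{eq:gauss_iso_alpha}. The second term equals $f(s_E)-f(s_\Omega)$, with $\phi(s_E)=\gamma(E)$ and $\phi(s_\Omega)=\gamma(\Omega)$; thanks to the lower bound on $\gamma(E)$, the pair $(s_E,s_\Omega)$ lies in a compact interval depending only on $\gamma(\Omega)$, on which $|f'|$ is bounded below and $\phi'$ is bounded above. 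Integrating gives $h_\gamma(H_E)-h_\gamma(H_\Omega)\ge c(\gamma(\Omega))\bigl(\gamma(\Omega)-\gamma(E)\bigr)$.

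The last step transfers the asymmetry estimate from $E$ to $\Omega$: letting $\omega^*\in\mathbb{S}^{n-1}$ attain the minimum defining $\alpha_\gamma(E)$ and noting $H_{E,\omega^*}\subset H_{\Omega,\omega^*}$, the triangle inequality for symmetric differences with respect to $\gamma$ gives
\[
\alpha_\gamma(\Omega)\;\le\; \gamma(\Omega\,\Delta\, H_{\Omega,\omega^*})\;\le\; 2\bigl(\gamma(\Omega)-\gamma(E)\bigr)+\alpha_\gamma(E).
\]
Squaring, using $(\gamma(\Omega)-\gamma(E))^2\le \gamma(\Omega)(\gamma(\Omega)-\gamma(E))$ and inserting the two lower bounds of the previous step, produces $\alpha_\gamma^2(\Omega)\le C(\gamma(\Omega))\bigl(h_\gamma(\Omega)-h_\gamma(H_\Omega)\bigr)$, as desired. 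The main obstacle I anticipate is the compactness input: one must rigorously rule out, in the non-trivial regime, that $\gamma(E)$ degenerates toward $0$ or $\gamma(\Omega)$, since otherwise neither the constant in \eqref{eq:gauss_iso_alpha} nor the slope $|f'|$ controlling $(II)$ can be made uniform in $\gamma(\Omega)$. Once this compactness is secured, the rest is a direct bookkeeping analogous to the Euclidean argument.
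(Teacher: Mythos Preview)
Your proposal is correct and follows essentially the same approach as the paper: the same splitting of $h_\gamma(\Omega)-h_\gamma(H_\Omega)$, the same reduction via the strict monotonicity and divergence of $s\mapsto h_\gamma(H_s)$ (this is the content of Lemma~\ref{calc}) to obtain a uniform lower bound on $\gamma(E)$, the quantitative isoperimetric inequality~\eqref{eq:gauss_iso_alpha} applied to $E$, and the mean-value estimate for the halfspace term. The only cosmetic difference is in the transfer step, where the paper uses a chain of set inclusions to get $(\alpha_\gamma(E)+\gamma(\Omega)-\gamma(E))^2\ge\tfrac14\alpha_\gamma^2(\Omega)$ directly, while you obtain the equivalent bound via the triangle inequality for symmetric differences; your anticipated ``obstacle'' about $\gamma(E)$ degenerating is already handled by your own reduction to $h_\gamma(\Omega)\le 2h_\gamma(H_\Omega)$ together with $\gamma(E)\le\gamma(\Omega)<1$.
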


\begin{thm}
\label{thm:gauss_logbeta}
Let $\Omega$ be an open set in $\R^n$. There exists a constant $c=c(\gamma(\Omega))$ such that
\begin{equation}\label{eq:che_gaus_beta}
h_\gamma(\Omega) - h_\gamma(H_\Omega) \geq c \, \frac{\beta_\gamma(\Omega)}{1 + \sqrt{|\log(\beta_\gamma(\Omega))|}},
\end{equation}
where $\beta_\gamma(\cdot)$ is defined in~\eqref{eq:def_beta_gauss}.
\end{thm}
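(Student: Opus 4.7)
The plan is to mirror the proof of Theorem~\ref{thm:zeta}: pick a Cheeger set $E\subset\Omega$, so that $h_\gamma(E)=h_\gamma(\Omega)$ and $\gamma(E)\le\gamma(\Omega)$, then transfer the quantitative isoperimetric inequality~\eqref{eq:gauss_iso_beta} applied to $E$ to a bound on $\beta_\gamma(\Omega)$ by the triangle inequality, after controlling the error introduced by replacing $\Omega$ with $E$. The logarithmic correction in~\eqref{eq:che_gaus_beta} will emerge from the sharp Gaussian estimate on the barycenter of the (Gaussian-small) difference $\Omega\setminus E$.

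Set $\Delta h:=h_\gamma(\Omega)-h_\gamma(H_\Omega)\ge 0$ and $\delta:=\gamma(\Omega)-\gamma(E)\ge 0$, and let $s_E,s_\Omega$ satisfy $\phi(s_E)=\gamma(E)$ and $\phi(s_\Omega)=\gamma(\Omega)$. A preliminary reduction handles the case $\gamma(E)<\gamma(\Omega)/2$: there the isoperimetric inequality gives $h_\gamma(\Omega)\ge h_\gamma(H_E)\ge h_\gamma(H_{\phi^{-1}(\gamma(\Omega)/2)})>h_\gamma(H_\Omega)$, so $\Delta h$ is bounded below by a constant depending only on $\gamma(\Omega)$, and the conclusion follows from $\beta_\gamma(\Omega)\le 1$. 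Otherwise $\gamma(E)\in[\gamma(\Omega)/2,\gamma(\Omega)]$, so $s_E,s_\Omega$ lie in a compact interval determined by $\gamma(\Omega)$. Since $s\mapsto h_\gamma(H_s)=e^{-s^2/2}/\phi(s)$ is smooth and strictly decreasing, in this range the mean value theorem gives $\delta\le C\,\Delta h$ and $|e^{-s_\Omega^2/2}-e^{-s_E^2/2}|\le C\,\delta$ with $C=C(\gamma(\Omega))$, while~\eqref{eq:gauss_iso_beta} together with $h_\gamma(E)=h_\gamma(\Omega)$ and $h_\gamma(H_E)\ge h_\gamma(H_\Omega)$ yields
\[
\beta_\gamma(E)\le C\bigl(P_\gamma(E)-P_\gamma(H_E)\bigr)=C\gamma(E)\bigl(h_\gamma(\Omega)-h_\gamma(H_E)\bigr)\le C\,\Delta h.
\]
Let $\omega^*$ realise $\beta_\gamma(E)$ and recall $b(H_{s,\omega})=-\frac{1}{\sqrt{2\pi}}e^{-s^2/2}\omega$. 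Then the triangle inequality combined with $b(\Omega)=b(E)+b(\Omega\setminus E)$ produces
\[
\beta_\gamma(\Omega)\le|b(H_{\Omega,\omega^*})-b(\Omega)|\le \frac{|e^{-s_\Omega^2/2}-e^{-s_E^2/2}|}{\sqrt{2\pi}}+\beta_\gamma(E)+|b(\Omega\setminus E)|\le C\bigl(\Delta h+|b(\Omega\setminus E)|\bigr).
\]

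The main obstacle is the estimate on $|b(\Omega\setminus E)|$, and it is here that the logarithmic factor necessarily appears. Setting $A:=\Omega\setminus E$, so $\gamma(A)=\delta$, the bathtub principle applied to the integrable function $x\mapsto x\cdot v$, for each unit direction $v$, gives
\[
\int_A x\cdot v\,\mathrm{d}\gamma(x)\le\int_{\{x\cdot v>t_*\}}x\cdot v\,\mathrm{d}\gamma(x)=\frac{1}{\sqrt{2\pi}}e^{-t_*^2/2},
\]
where $t_*:=\phi^{-1}(1-\delta)$ is chosen so that the halfspace on the right has Gaussian measure exactly $\delta$. Taking the supremum over $v$ gives $|b(A)|\le\frac{1}{\sqrt{2\pi}}e^{-t_*^2/2}$, and the Gaussian tail asymptotic $1-\phi(t)\sim(t\sqrt{2\pi})^{-1}e^{-t^2/2}$ yields, uniformly for $\delta\le\gamma(\Omega)/2$, the sharp bound $|b(A)|\le C\,\delta\sqrt{1+|\log\delta|}$ with $C=C(\gamma(\Omega))$. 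Combined with $\delta\le C\Delta h$ this gives
\[
\beta_\gamma(\Omega)\le C\,\Delta h\bigl(1+\sqrt{1+|\log\Delta h|}\bigr).
\]
A routine inversion---using that $|\log\Delta h|$ and $|\log\beta_\gamma(\Omega)|$ are comparable when $\Delta h$ is of order $\beta_\gamma(\Omega)/\sqrt{|\log\beta_\gamma(\Omega)|}$, and that $\Delta h\sqrt{1+|\log\Delta h|}$ is a continuous monotone function of $\Delta h$ whose level sets control the large-$\beta_\gamma$ regime---then rearranges the last display into the desired bound~\eqref{eq:che_gaus_beta}.
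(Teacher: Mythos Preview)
Your argument is correct and follows the same overall strategy as the paper: take a Cheeger set $E$, reduce to $\gamma(E)$ lying in a compact subinterval of $(0,1)$ (so that all constants depend only on $\gamma(\Omega)$), use~\eqref{eq:gauss_iso_beta} on $E$ to control $\beta_\gamma(E)$, and pass to $\beta_\gamma(\Omega)$ via the triangle inequality on barycenters, the residual term being $|b(\Omega\setminus E)|$.

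The only genuine difference is in how the logarithmic correction is extracted. You obtain the sharp estimate $|b(A)|\le C\,\gamma(A)\sqrt{1+|\log\gamma(A)|}$ via the bathtub principle and then invert the resulting inequality $\beta_\gamma(\Omega)\le C\,\Delta h\,(1+\sqrt{1+|\log\Delta h|})$. The paper instead introduces the auxiliary function $\Phi(\rho)=\rho/(1+\sqrt{|\log\rho|})$ and records the equivalent bound as Lemma~\ref{simple}, namely $\Phi(|b(A)|)\le C\,\gamma(A)$; combined with a dichotomy ($\beta_\gamma(\Omega)\le 2\beta_\gamma(E)$ versus $\beta_\gamma(\Omega)\le 2(|b(H_\Omega\setminus H_E)|+|b(\Omega\setminus E)|)$) and the almost-homogeneity $\Phi(\rho/4)\ge c\,\Phi(\rho)$, this gives $\Phi(\beta_\gamma(\Omega))\le C(\gamma(\Omega)-\gamma(E))$ directly, with no inversion needed. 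The two packagings are equivalent---your bathtub bound is exactly $|b(A)|\le |b(H_A)|$, which is the first line of the paper's proof of Lemma~\ref{simple}---but the paper's formulation via $\Phi$ is slightly cleaner, while your route makes the origin of the $\sqrt{|\log\cdot|}$ factor (the Gaussian tail asymptotic) more transparent. Your final inversion step, though stated tersely, is indeed routine once one checks that $t\mapsto t(1+\sqrt{1+|\log t|})$ is increasing on $(0,1]$ and that $|\log(c\Phi(\beta))|\le C'(1+|\log\beta|)$.
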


We remark that neither inequality~\eqref{eq:che_gaus_beta} implies inequality~\eqref{eq:che_gaus_alpha}, nor is inequality~\eqref{eq:che_gaus_alpha} stronger than inequality~\eqref{eq:che_gaus_beta}.
Finally, we show in Section~\ref{ssec:gaus_beta_not} by means of an example that the dependence on the asymmetry in~\eqref{eq:che_gaus_beta} is optimal (see also the result in~\cite{Eld15}). \par

\subsection{Preliminary lemmas}

In this section, we prove some lemmas regarding properties of one-dimensional functions, which are useful in the proofs of Theorems~\ref{thm:gauss_alpha} and~\ref{thm:gauss_logbeta}. We recall the definition of the complementary error function $\erfc(\cdot)$ and some lower and upper bounds to it, which we will use later. Given $x>0$, we set 
\[
\erfc(x) := \frac{2}{\sqrt{\pi}} \int_{x}^{+\infty} e^{t^2}\mathrm{d}t.
\]
For $x>>1$ one has
\begin{equation}\label{eq:Taylor_erfc}
\frac{e^{-x^2}}{\sqrt{\pi}}\left(\frac{1}{x} - \frac{1}{x^3}\right) \le \erfc(x) \le \frac{e^{-x^2}}{\sqrt{\pi}}\left(\frac{1}{x}\right)
\end{equation}
as one can easily infer by using the asymptotic expansion of the complementary error function.

\begin{lem}\label{calc}
Let $\varphi \colon \R \to (0,\infty)$ be the function defined as
\[
\varphi(s) :=  \frac{P_\gamma(H_s)}{\gamma(H_s)}. 
\]
Then, $\varphi'(s)<0$ for all $s\in \mathbb{R}$, and $\lim_{s\to-\infty} \varphi(s)=+\infty$. 
\end{lem}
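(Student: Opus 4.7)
The plan is to work directly from the explicit formula $\varphi(s)=e^{-s^2/2}/\phi(s)$, which is immediate from the identities $P_\gamma(H_s)=e^{-s^2/2}$ and $\gamma(H_s)=\phi(s)$ recorded just before the lemma. A direct differentiation using $\phi'(s)=(2\pi)^{-1/2}e^{-s^2/2}$ gives
\[
\varphi'(s) \;=\; -\frac{e^{-s^2/2}}{\phi(s)^2}\left( s\,\phi(s) + \frac{1}{\sqrt{2\pi}}\,e^{-s^2/2}\right),
\]
so the first claim reduces to showing that the quantity $\psi(s) := s\,\phi(s) + (2\pi)^{-1/2}e^{-s^2/2}$ is strictly positive on all of $\R$.

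For $s\ge 0$ this is obvious, since both summands are non-negative and the second is strictly positive. For $s<0$ the two summands have opposite signs, and the plan is to use the standard Mills-ratio bound: writing $\phi(s) = (2\pi)^{-1/2}\int_{|s|}^{\infty} e^{-u^2/2}\,\textrm{d}u$ and performing one integration by parts with the factor $1 = u/u$ yields
\[
\phi(s) \;=\; \frac{1}{\sqrt{2\pi}}\,\frac{e^{-s^2/2}}{|s|} \,-\, \frac{1}{\sqrt{2\pi}}\int_{|s|}^\infty \frac{e^{-u^2/2}}{u^2}\,\textrm{d}u \;<\; \frac{1}{\sqrt{2\pi}}\,\frac{e^{-s^2/2}}{|s|}.
\]
Multiplying by $s=-|s|$ gives $s\,\phi(s) > -(2\pi)^{-1/2}e^{-s^2/2}$, which is precisely $\psi(s)>0$.

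For the limit at $-\infty$, the same integration by parts (equivalently, the estimate \eqref{eq:Taylor_erfc} transported to $\phi$ via $\phi(s) = \tfrac12\erfc(|s|/\sqrt 2)$ for $s<0$) yields the matching lower bound $\phi(s) \ge (2\pi)^{-1/2} e^{-s^2/2}(|s|^{-1} - |s|^{-3})$ for $|s|$ large. Combined with the upper bound above this gives $\phi(s) \sim (2\pi)^{-1/2}|s|^{-1}e^{-s^2/2}$, and hence $\varphi(s) \sim \sqrt{2\pi}\,|s| \to +\infty$ as $s\to-\infty$. I do not anticipate any substantive obstacle: the only mildly delicate point is the sign of $\psi$ on the negative half-line, which the Mills-ratio estimate handles in a single line.
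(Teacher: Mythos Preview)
Your proof is correct and follows essentially the same route as the paper: the paper differentiates the reciprocal $f(s)=\sqrt{2\pi}/\varphi(s)=\sqrt{2\pi}\,e^{s^2/2}\phi(s)$ and reduces to showing $f'(s)=1+s\sqrt{2\pi}\,e^{s^2/2}\phi(s)>0$, which is your $\psi(s)$ up to the positive factor $(2\pi)^{-1/2}e^{-s^2/2}$, and then uses the identical integration-by-parts (Mills-ratio) estimate for $s<0$ and the same $\erfc$ bound for the limit.
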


\begin{proof}
The first part of the claim is equivalent to show that the function $f(s) =\sqrt{2\pi}(  \varphi(s))^{-1}$ satisfies $f'(s)>0$ for all $s \in \R$. 
Using the definition of the Gaussian perimeter and volume, we may equivalently write
\[
f(s) = \sqrt{2\pi} \, e^{\frac{s^2}{2}} \phi(s).
\]
As $\sqrt{2\pi} \, \phi'(s) = e^{-\frac{s^2}{2}}$, one readily computes the first derivative of $f$ 
\begin{equation}\label{eq:f'}
f'(s) = 1 +s f(s) = 1+se^{\frac{s^2}{2}} \sqrt{2\pi} \phi(s),
\end{equation}
which in particular is continuous. Trivially, $f'(s)\geq 1$  for $s\geq 0$. Thus we are left to check that $f'(s)>0$, for values $s <0$. \par

By integration by parts we have 
\[
\sqrt{2\pi} \phi(s) = \int_{-\infty}^s \left(-\frac{1}{t}\right) (-te^{-\frac{t^2}{2}})\, \textrm{d}t = - \frac{e^{-\frac{s^2}{2}}}{s} -  \int_{-\infty}^s\frac{1}{t^2} e^{-\frac{t^2}{2}}\, \textrm{d}t,
\]
which plugged in~\eqref{eq:f'} yields
\begin{equation*}
f'(s) = -s e^{\frac{s^2}{2}} \int_{-\infty}^s\frac{1}{t^2} e^{-\frac{t^2}{2}}\, \textrm{d}t.
\end{equation*}
It is immediate that, for any fixed $s<0$, this is positive. Hence, we have the first part of the claim.\par

To check the second part, we use the upper bound on $\erfc$ given in~\eqref{eq:Taylor_erfc}. For $s<< -1$, we have
\begin{equation*}
\frac{1}{\sqrt{2 \pi}}\varphi(s) = \frac{e^{-\frac{s^2}{2}}}{\int_{-\infty}^s e^{-\frac{t^2}{2}}\textrm{d}t} =  \frac{e^{-\frac{s^2}{2}}}{\int_{|s|}^{+\infty} e^{-\frac{t^2}{2}}\textrm{d}t} = \frac{e^{-\frac{s^2}{2}}}{\sqrt{\frac{\pi}{2}}\erfc\left(\frac{|s|}{\sqrt{2}} \right)} \ge |s|,
\end{equation*}
which completes the proof.
\end{proof}

For the sake of completeness, we remark two consequences of Lemma~\ref{calc}. First, Gaussian Cheeger sets exist. Second,  the Cheeger set of any given halfspace $H_s$ is the halfspace itself. Indeed, when proving existence one easily sees that any minimizing sequence $\{E_k\}_k$ is bounded in $BV_\gamma(\R^n)$ and hence, up to a subsequence, it converges to some set $E$. By the lower semicontinuity of $P_\gamma(\cdot)$, in order to show that this limit set is a minimizer one only needs to check that $\gamma(E)>0$. The previous lemma can be used to show that minimizing sequences are such that $\gamma(E_k)$ does not converge to $0$, as the ratio $P_\gamma(E_k)\gamma(E_k)^{-1}$ would otherwise be unbounded. Regarding the minimality of $H_s$, the Gaussian isoperimetric inequality ensures that for any fixed volume the halfspace is the perimeter minimizer, while the lemma ensures that any halfspace $H_\sigma$ strictly contained in $H_s$ has ratio $P_\gamma(H_\sigma)\gamma(H_\sigma)^{-1}$ bigger than  that of $H_s$.

\begin{lem}\label{lem:Phi}
Let $\Phi : [0,1] \to [0,1]$ be the function 
\begin{equation}\label{eq:def_Phi}
\Phi(\rho) = \frac{\rho}{1 + \sqrt{|\log(\rho)|}},
\end{equation}
defined by continuity at $\rho=0$ as $\Phi(0)=0$. Then, $\Phi$ is increasing, with 
\begin{equation}\label{eq:prop_phi_above}
\Phi(\rho) \le \rho
\end{equation}
and 
\begin{equation}\label{eq:prop_phi_below}
\Phi\left(\frac14 \rho\right) \geq \frac{1}{4(1+\sqrt{\log(4)})} \Phi(\rho).
\end{equation}
\end{lem}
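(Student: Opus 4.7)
The three claims are quite different in flavor, so I would handle them separately.

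The bound \eqref{eq:prop_phi_above} is immediate from the definition: on $(0,1]$ we have $|\log \rho| \geq 0$, so $1 + \sqrt{|\log \rho|} \geq 1$, and the case $\rho=0$ holds by the extension $\Phi(0)=0$. I would dispatch this in one line.

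For the monotonicity, I would work on the open interval $(0,1)$ and compute $\Phi'$ directly. Setting $g(\rho) := 1 + \sqrt{-\log \rho}$, the quotient rule and $g'(\rho) = -\tfrac{1}{2\rho\sqrt{-\log \rho}}$ give
\[
\Phi'(\rho) = \frac{g(\rho) - \rho\, g'(\rho)}{g(\rho)^2} = \frac{1 + \sqrt{-\log \rho} + \frac{1}{2\sqrt{-\log \rho}}}{\bigl(1+\sqrt{-\log\rho}\bigr)^2} > 0,
\]
which gives strict monotonicity on $(0,1)$. Continuity at the endpoints (at $\rho=0$ by the chosen extension, at $\rho=1$ since $\Phi(1)=1$ and $\Phi(\rho)\to 1$) upgrades this to monotonicity on $[0,1]$.

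The interesting step is \eqref{eq:prop_phi_below}. The identity $|\log(\rho/4)| = |\log \rho| + \log 4$, valid for $\rho \in (0,1]$, converts the desired inequality into
\[
\frac{1+\sqrt{|\log \rho|}}{1+\sqrt{|\log \rho|+\log 4}} \;\geq\; \frac{1}{1+\sqrt{\log 4}},
\]
after cancelling the factor $\rho/4$ on both sides. Cross-multiplying, I would invoke the subadditivity of the square root, $\sqrt{|\log \rho|+\log 4}\leq \sqrt{|\log\rho|}+\sqrt{\log 4}$, to estimate the right-hand side; the resulting inequality reduces to $\sqrt{\log 4}\cdot \sqrt{|\log \rho|} \geq 0$, which is trivial. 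The $\rho=0$ case is handled by the extension, and $\rho=1$ already gives equality. There is no real obstacle here — the only thing to watch out for is the sign of $\log \rho$ when rewriting $|\log(\rho/4)|$, but since $\rho/4 \le 1/4 < 1$ this is unambiguous.
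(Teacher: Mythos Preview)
Your argument is correct and follows essentially the same route as the paper: the derivative computation for monotonicity and the one-line bound $\Phi(\rho)\le\rho$ match, and for \eqref{eq:prop_phi_below} both you and the paper reduce to the same ratio inequality $\dfrac{1+\sqrt{|\log\rho|}}{1+\sqrt{|\log(\rho/4)|}}\ge\dfrac{1}{1+\sqrt{\log 4}}$. The only difference is in how this last inequality is dispatched: the paper asserts that the ratio is decreasing in $\rho$ (hence minimized at $\rho=1$), while you verify it directly via the subadditivity $\sqrt{|\log\rho|+\log 4}\le\sqrt{|\log\rho|}+\sqrt{\log 4}$ --- a slightly more explicit but equivalent finish.
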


\begin{proof}
We begin by showing that $\Phi$ is increasing. We have
\begin{equation*}
\Phi'(\rho) = \frac{1}{1 + \sqrt{|\log(\rho)|}} - \frac{\sgn(\log(\rho))}{2\sqrt{|\log(\rho)|}(1 + \sqrt{|\log(\rho)|})^2}.
\end{equation*}
Since $\rho\in[0,1]$, we immediately get $\Phi'(\rho)\ge0$ as $\sgn(\log(\rho)) \leq 0$. Also the bound \eqref{eq:prop_phi_above} is trivial. 

We are left with \eqref{eq:prop_phi_below}. To this aim we write 
\begin{equation*}
\Phi(\rho/4)=\frac{\rho/4}{1+\sqrt{|\log(\rho/4)|}} = \frac 14 \Phi(\rho) \frac{1+\sqrt{|\log(\rho)|}}{1+\sqrt{|\log(\rho/4)|}}.
\end{equation*}
The claim follows as
\[
g(\rho):=\frac{1+\sqrt{|\log(\rho)|}}{1+\sqrt{|\log(\rho/4)|}}
\]
attains its minimum over the interval $[0,1]$ at  $\rho=1$. Indeed, one can check that $g(\rho)$ is decreasing in $[0,1]$. 
\end{proof}

Thanks to the monotonicity property stated in the above lemma, we can now show that we can control from above $\Phi(|b(E)|)$ with the mass of the set itself $\gamma(E)$, up to some multiplicative, universal constant.

\begin{lem}
\label{simple}
There is a constant $C$ such that for any set $E \subset \R^n$   it holds
\[
\Phi(|b(E)|) \leq C\gamma(E). 
\]
\end{lem}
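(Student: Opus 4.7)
The plan is to reduce the inequality to a one-dimensional comparison on halfspaces. Recall from the discussion preceding the lemma that $|b(E)| \leq |b(H_E)|$ for every measurable set, where $H_E$ is any halfspace with $\gamma(H_E) = \gamma(E)$. Writing $\gamma(E) = \phi(s)$ and integrating in coordinates aligned with the normal of $H_s$, a direct computation yields $|b(H_s)| = e^{-s^2/2}/\sqrt{2\pi}$. Since $\Phi$ is monotone by Lemma~\ref{lem:Phi}, the task reduces to showing
\[
\Phi\!\left(\frac{e^{-s^2/2}}{\sqrt{2\pi}}\right) \leq C\,\phi(s), \qquad \forall s \in \R,
\]
for a universal constant $C$.

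I would then split the analysis at a sufficiently negative threshold $s_0$. When $s \geq s_0$, both sides are uniformly comparable to positive constants: the left-hand side is at most $\Phi(1/\sqrt{2\pi})$ while the right-hand side is at least $\phi(s_0)>0$, so the inequality holds provided $C$ is chosen large enough. When $s < s_0$, I would invoke the lower bound in~\eqref{eq:Taylor_erfc} applied to $\phi(s) = \tfrac{1}{2}\erfc(|s|/\sqrt{2})$ to obtain $\phi(s) \gtrsim e^{-s^2/2}/|s|$. At the same time, since $|\log(e^{-s^2/2}/\sqrt{2\pi})| = s^2/2 + \log\sqrt{2\pi}$, the denominator defining $\Phi$ satisfies $1 + \sqrt{|\log(\cdot)|} \gtrsim |s|$ for $|s|$ large. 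The two factors of $|s|$ cancel, producing the desired bound.

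The main obstacle is that the naive estimate $|b(H_s)| \leq C\phi(s)$ actually fails: the erfc asymptotic forces an extra multiplicative factor of $|s|$ as $s \to -\infty$. The function $\Phi$ is engineered precisely so that its logarithmic denominator furnishes a $|s|^{-1}$ saving that absorbs this loss, which is why the lemma is phrased with $\Phi$ rather than with the identity map. Beyond this matching of orders, the argument is routine, relying only on the halfspace bound for the barycenter, the explicit value of $|b(H_s)|$, and the monotonicity already established in Lemma~\ref{lem:Phi}.
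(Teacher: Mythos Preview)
Your proposal is correct and follows essentially the same route as the paper: reduce to halfspaces via $|b(E)|\le|b(H_E)|$ and the monotonicity of $\Phi$, compute $|b(H_s)|=e^{-s^2/2}/\sqrt{2\pi}$, treat the bounded-$s$ range by compactness, and for $s\to-\infty$ balance the $|s|^{-1}$ loss in the erfc lower bound for $\phi(s)$ against the $|s|$ gain in the denominator of $\Phi$. The paper carries out exactly this computation with explicit constants, so there is no meaningful difference in approach.
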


\begin{proof}
First recall that for  any set $E$ it holds $|b(E)| \leq |b(H_E)|$. Therefore,  by the monotonicity of $\Phi$ we have
\[
\Phi(|b(E)|) \leq \Phi(|b(H_E)|).
\]
Hence, to prove the claim it suffices to show the validity of the inequality for halfspaces. Second, as $\Phi(|b(E)|) \le 1$, it is enough to prove the inequality for small values of $\gamma(H_E)$, and thus of $|b(H_E)|$. \par

We denote by $s_E:= \phi^{-1}(\gamma(H_E))$. To conclude we need to prove the inequality
\[
\Phi(|b(H_E)|) \leq  C\gamma(H_E),
\]
for values $s_E<0$ such that $|s_E|>>1$. First, notice that
\begin{align*}
\gamma(H_E) = \frac{1}{\sqrt{2 \pi}} \int_{-\infty}^{s_E} e^{-\frac{t^2}{2}} \, \textrm{d}t, && |b(H_E)| = \frac{1}{\sqrt{2\pi}}e^{-\frac{s_E^2}{2}}.
\end{align*}
On the one hand,  for all $s$ we have 
\begin{equation}\label{eq:stima1}
\begin{split}
\Phi(|b(H_s)|)&=\Phi\left( \frac{1}{\sqrt{2\pi}} e^{-\frac{s^2}{2}}\right)= \frac{e^{-\frac{s^2}{2}}}{\sqrt{2\pi}\left(1+\sqrt{\left|\log((2\pi)^{-\frac 12}e^{-\frac{s^2}{2}})\right| }\right)}\\
&= \frac{e^{-\frac{s^2}{2}}}{\sqrt{2\pi}\left(1+\sqrt{\frac 12\log(2\pi) + \frac{|s|^2}{2} }\right)} \le \frac{e^{-\frac{s^2}{2}}}{\sqrt{2\pi}\left(1+ \frac{|s|}{\sqrt 2}\right)}.
\end{split}
\end{equation}
On the other hand, for $s<0$ such that $|s|>>1$, using the asymptotic behavior of $\erfc$ given in~\eqref{eq:Taylor_erfc}, we have
\begin{equation}\label{eq:stima2}
\begin{split}
\gamma(H_s) &= \phi(s) = \frac{1}{\sqrt{2\pi}} \int_{-\infty}^s e^{-\frac{t^2}{2}}\textrm{d}t = \frac{1}{\sqrt{2\pi}} \int_{|s|}^{+\infty} e^{-\frac{t^2}{2}}\textrm{d}t\\
 &=  \frac{1}{\sqrt{2\pi}} \sqrt{\frac{\pi}{2}} \erfc\left( \frac{|s|}{\sqrt{2}}\right) \ge  \frac{e^{-\frac{|s|^2}{2}}}{\sqrt{2\pi}}   \left(\frac{1}{|s|} - \frac{2}{|s|^3} \right)
\end{split}
\end{equation}
Using~\eqref{eq:stima1} and~\eqref{eq:stima2} the claim boils down to check that there exists a constant $C>0$ such that the inequality
\[
C\left(1+ \frac{|s|}{\sqrt 2}\right) \ge  \frac{|s|^3}{|s|^2-2},
\]
holds for values $|s|>>1$.  This is obviously true for  $C\ge\sqrt{2}$.
\end{proof}

\subsection{Proof of Theorem~\ref{thm:gauss_alpha}}\label{ssec:gauss_alpha}


Let $E \subset \Omega$ be a Cheeger set of $\Omega$. Then
\begin{equation} \label{ineq1}
h_\gamma(\Omega) - h_\gamma(H_\Omega) = \frac{P_\gamma(E)- P_\gamma(H_E)}{\gamma(E)} +\left(  \frac{P_\gamma(H_E)}{\gamma(E)}- \frac{P_\gamma(H_\Omega)}{\gamma(\Omega)}\right).
\end{equation}
Let us first show that we may assume that 
\begin{equation}\label{eq:e_omega}
\epsilon \leq \gamma(E)  \leq 1 -\epsilon,
\end{equation}
for  $\epsilon >0$ which depends on  $\gamma(\Omega)$. The upper bound in \eqref{eq:e_omega} follows simply from $E \subset \Omega$ and thus $\gamma(E) \leq \gamma(\Omega) \leq 1 - \epsilon_1$. Here $\epsilon_1 = 1 - \gamma(\Omega) >0$ as we may obviously assume that $\Omega \neq \R^n$.  
For the lower bound,  we first notice that $\alpha_\gamma(\Omega) \le 2\gamma(\Omega)$, and thus~\eqref{eq:che_gaus_alpha} immediately follows if $h_\gamma(\Omega) - h_\gamma(H_\Omega)\ge 2$, by choosing $c(\gamma(\Omega))\le \gamma(\Omega)^{-1}$. Hence, we may  assume $h_\gamma(\Omega) - h_\gamma(H_\Omega)< 2$.  Let us set $s_\Omega := \phi^{-1}(\gamma(\Omega))$ and $s_E := \phi^{-1}(\gamma(E))$. Clearly, $s_E<s_\Omega$. Then, we have 
\[
\varphi(s_E)-\varphi(s_\Omega)= \frac{P_\gamma(H_E)}{\gamma(E)}- \frac{P_\gamma(H_\Omega)}{\gamma(\Omega)} =  h_\gamma(\Omega) - h_\gamma(H_\Omega)< 2.
\]
The behavior of $\varphi(s) \to \infty $ as $s\to -\infty$ given in Lemma~\ref{calc} implies that $\gamma(E)>\epsilon_2=\e_2(\gamma(\Omega))$. Setting $\epsilon:=\min\{\epsilon_1, \epsilon_2\}$ we obtain \eqref{eq:e_omega}. In particular,  there exists $R=R(\gamma(\Omega))$ such that $-R \leq s_E \leq s_\Omega \leq R$.\par

By the quantitative isoperimetric inequality~\eqref{eq:gauss_iso_alpha} we have
\begin{equation} \label{ineq2}
\frac{P_\gamma(E)- P_\gamma(H_E)}{\gamma(E)} \geq \frac{1}{\gamma(E)}c(s_E)\alpha_\gamma^2(E),
\end{equation}
where $c(s_E) = \frac{1}{\kappa}(1+s_E^2)^{-1}e^{\frac{s_E^2}{2}}$ with $\kappa>0$ a universal constant, as can be seen in~\cite{BBJ17}. As $s_E\in [-R, R]$ we may  replace $\gamma(E)^{-1}c(s_E)$ with a constant $c=c(R)$, thus ultimately with  $c_1=c_1(\gamma(\Omega))$. \par

On the other hand, Lemma~\ref{calc} coupled with the fact that $(\phi^{-1})'>0$ and two subsequent applications of the Mean Value Theorem imply
\begin{equation}\label{ineq3}
\begin{split}
\frac{P_\gamma(H_E)}{\gamma(E)} - &\frac{P_\gamma(H_\Omega)}{\gamma(\Omega)} = -\varphi'(\xi) (s_\Omega - s_E) \ge \tilde c_2 (s_\Omega - s_E)\\
&\ge \tilde c_2 (\phi^{-1})'(\hat \xi) (\gamma(\Omega)- \gamma(E)) \ge  c_2 (\gamma(\Omega)- \gamma(E)),
\end{split}
\end{equation}
where the two constants can be chosen as $\tilde c_2= \min_{[-R,R]}\{-\varphi'(s)\}$, and  $c_2 =\tilde c_2 \,  \min_{[\epsilon, 1-\epsilon]}  \{(\phi^{-1})'(\rho)\}$, and thus they ultimately depend only on $\gamma(\Omega)$. Combining the  inequalities~\eqref{ineq1},~\eqref{ineq2} and~\eqref{ineq3} yields
\begin{align}
h_\gamma(\Omega) - h_\gamma(H_\Omega) &\geq c_1 \alpha_\gamma^2(E) + c_2 (\gamma(\Omega)- \gamma(E)) \nonumber \\
&\geq \frac 15 \min\{c_1, c_2\} ( \alpha_\gamma(E)  + \gamma(\Omega)- \gamma(E))^2\nonumber \\
&\geq c \, (\gamma(\Omega)) ( \alpha_\gamma(E)  + \gamma(\Omega)- \gamma(E))^2, \label{ineq4}
\end{align}
where we used that $\gamma(\Omega)-\gamma(E)\le 1$ and  $\alpha_\gamma(E)\le2$. \par

We now set $H_E$ to be the halfspace with the  same measure as $E$ such that $\alpha_\gamma(E) = \gamma(E \Delta H_{E})$, and $H_\Omega$ to be the halfspace with the same measure as $\Omega$ containing $H_E$. Then, we have
\[
\alpha_\gamma(E) = \gamma(E \Delta H_{E}) \geq  \gamma(E \setminus  H_{E}) = \gamma(E) - \gamma(E \cap  H_{E}).
\]
Using the inequality above, that $E \subset \Omega$, that $H_E \subset H_\Omega$ and the equality $2\gamma(\Omega \setminus H_{\Omega}) = \gamma(\Omega \Delta H_{\Omega})$ we get the following estimate
\begin{align}
 ( \alpha_\gamma(E)  &+ \gamma(\Omega)- \gamma(E))^2 \geq   (\gamma(\Omega) -  \gamma(E \cap  H_{E}))^2\nonumber \\
& \geq  (\gamma(\Omega) -  \gamma(\Omega \cap  H_{E}))^2 = \gamma(\Omega \setminus H_{E})^2 \nonumber \\
& \geq \gamma(\Omega \setminus H_{\Omega})^2  =\frac14 \gamma(\Omega \Delta H_{\Omega})^2 \geq \frac14 \alpha_\gamma^2(\Omega). \label{ineq5}
\end{align}
Inequality~\eqref{ineq4} paired with~\eqref{ineq5} finally yields the claim. 

\subsection{Proof of Theorem~\ref{thm:gauss_logbeta}}\label{ssec:gauss_logbeta}

Recall that $\beta_\gamma(\Omega) \le 1$, and recall the definition of $\Phi(\rho)$ given in~\eqref{eq:def_Phi}. Thus, we aim to show 
\[
h_\gamma(\Omega) - h_\gamma(H_\Omega) \geq c \, \Phi(\beta_\gamma(\Omega)).
\]
We begin by noticing that 
\[
\beta_\gamma(\Omega) =|b(H_\Omega)-b(\Omega)| \le 4|b(H_\Omega)|.
\]
Hence, by the properties of the function $\Phi$ established in Lemma~\ref{lem:Phi}, together with Lemma~\ref{simple}, we have
\[
\Phi(\beta_\gamma(\Omega)) \le C\gamma(\Omega).
\]
Thus, as in the proof of Theorem~\ref{thm:gauss_alpha} we may assume without loss of generality that $h_\gamma(\Omega)-h_\gamma(H_\Omega)<C$, since otherwise the claim follows as before. Then, we may argue again as in the proof of Theorem~\ref{thm:gauss_alpha} and we may assume without loss of generality
\[
\epsilon \leq \gamma(E)  \leq 1 -\epsilon,
\]
for  $\epsilon >0$ which depends only on $\gamma(\Omega)$, because otherwise the claim immediately follows. Then, we may argue again as in the proof of Theorem~\ref{thm:gauss_alpha} and obtain inequality~\eqref{ineq3}. In place of using the quantitative isoperimetric inequality~\eqref{eq:gauss_iso_alpha}  and obtaining~\eqref{ineq2} we use the strong quantitative isoperimetric inequality~\eqref{eq:gauss_iso_beta} and get
\begin{equation}\label{ineq6}
\frac{P_\gamma(E)- P_\gamma(H_E)}{\gamma(E)} \geq  \frac{1}{\gamma(E)} c(s_E)\beta_\gamma(E),
\end{equation}
now with $c(s_E)=\frac 1\kappa(1+s_E^2)^{-1}$ (see again~\cite{BBJ17}), and thus ultimately a constant $c_1=c_1(\gamma(\Omega))$.

Combining~\eqref{ineq1} with~\eqref{ineq6} and~\eqref{ineq3} we deduce
\begin{equation} \label{ineq7}
h_\gamma(\Omega) - h_\gamma(H_\Omega) \geq c_1 \beta_\gamma(E) + c_2 \,( \gamma(\Omega)- \gamma(E) ). 
\end{equation}
We fix $H_E$ to be the halfspace such that $\beta_\gamma(E)=|b(H_E)-b(E)|$, and we let $H_\Omega$ be the halfspace with the same measure of $\Omega$ such that $H_E \subset H_\Omega$. By splitting $H_\Omega$ into $H_E$ and  $H_\Omega\setminus H_E$, adding and removing $b(E)$, and using the triangle inequality,  we get
\begin{align*}
\beta_\gamma(\Omega) \le |b(H_\Omega) - b(\Omega)| &=  |b(H_E) - b(E) + b(H_\Omega\setminus H_E) + b(E) - b(\Omega)|\\
&\le \beta_\gamma(E) + |b(H_\Omega \setminus H_E)| + |b(E)-b(\Omega)|\\
&= \beta_\gamma(E) + |b(H_\Omega \setminus H_E)| + |b(\Omega \setminus E)| \\
&\le 2\max\{ \beta_\gamma(E), |b(H_\Omega \setminus H_E)| + |b(\Omega \setminus E)|\}.
\end{align*}
Thus we have either
\begin{equation}\label{ineq8}
\beta_\gamma(\Omega) \leq 2 \beta_\gamma(E),
\end{equation}
or
\begin{equation}\label{ineq9}
\beta_\gamma(\Omega) \le 2\big( |b(H_\Omega \setminus H_E)| + |b(\Omega \setminus E)|\big).
\end{equation}

If~\eqref{ineq8} holds true, we obtain the result by~\eqref{eq:prop_phi_above}, \eqref{ineq8} and~\eqref{ineq7}. If~\eqref{ineq9} holds true, we have that
\[
\frac 14 \beta_\gamma(\Omega) \le \max\{|b(H_\Omega \setminus H_E)|, |b(\Omega \setminus E)|\}.
\]
By the monotonicity of $\Phi$ and its property~\eqref{eq:prop_phi_below}, it follows
\[
\Phi(\beta_\gamma(\Omega)) \leq 4(1+\sqrt{\log(4)}) \max\{\Phi(|b(H_\Omega \setminus H_E)|), \Phi(|b(\Omega \setminus E)|)\}.
\]
As the sets $H_\Omega \setminus H_E$ and $\Omega \setminus E$ have the same Gaussian measure, and as $E\subset \Omega$, by Lemma~\ref{simple} we finally get
\[
\Phi(\beta_\gamma(\Omega)) \leq C (\gamma(\Omega \setminus E)) = C(\gamma(\Omega)-\gamma(E)),
\]
which coupled with~\eqref{ineq7} allows us to conclude.

\subsection{The sharpness of the inequality with the index $\beta_\gamma$}\label{ssec:gaus_beta_not}

In this section, we show that the dependence on the asymmetry in Theorem \ref{thm:gauss_logbeta} is sharp.   Let $T>>1$, and let us define the family of one-dimensional sets $\{\Omega_T\}_T$ as
\[
\Omega_T := (-\infty, -1)\cup(T,+\infty).
\]


 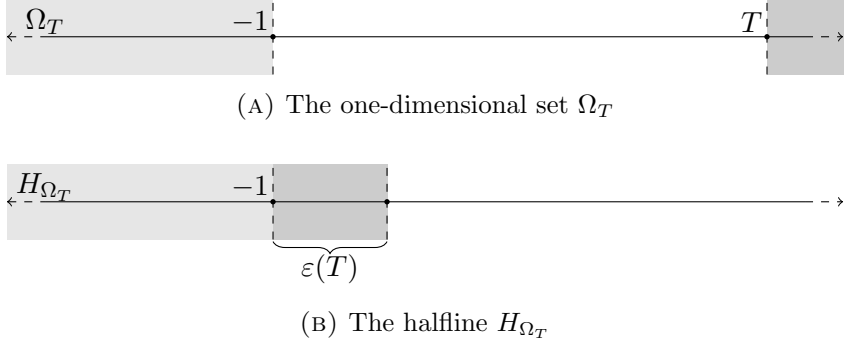
\begin{figure}[t]
   	 \begin{subfigure}{1\linewidth}
\centering
\begin{tikzpicture}
\fill[fill=black!10!white] (-5.5,-.5)  rectangle (-2,.5);
\fill[fill=black!20!white] (4.5,-.5)  rectangle (5.5,.5);
\draw (-5,.2) node {$\Omega_T$};
\draw (-2.3,0.2) node {$-1$};
\fill (-2,0) circle (1pt);
\draw (4.3,0.2) node {$T$};
\fill (4.5,0) circle (1pt);
\draw[black, dashed] (-2, -.5) -- (-2, .5);
\draw [black, dashed] (4.5,-.5) -- (4.5,.5);
\draw (-5,0) -- (5,0); 
\draw[<-, dashed] (-5.5, 0) -- (-5,0);
\draw[->, dashed] (5, 0) -- (5.5,0);
\end{tikzpicture}
\caption{The one-dimensional set $\Omega_T$}
\label{fig:ag}
	\end{subfigure}

\vskip .5cm

 	  \begin{subfigure}{1\linewidth}
\centering
\begin{tikzpicture}
\fill[fill=black!10!white] (-5.5,-.5)  rectangle (-2,.5);
\fill[fill=black!20!white] (-2,-.5)  rectangle (-.5,.5);
\draw (-5,.2) node {$H_{\Omega_T}$};
\draw (-2.3,0.2) node {$-1$};
\fill (-2,0) circle (1pt);
\draw [decorate,decoration={brace,amplitude=4.5pt,mirror,raise=15}]
  (-2,0) -- (-.5,0) node[midway,yshift=-2.3em]{$\e(T)$};
\fill (-.5,0) circle (1pt);
\draw[black, dashed] (-2, -.5) -- (-2, .5);
\draw[black, dashed] (-.5, -.5) -- (-.5, .5);
\draw (-5,0) -- (5,0); 
\draw[<-, dashed] (-5.5, 0) -- (-5,0);
\draw[->, dashed] (5, 0) -- (5.5,0);
\end{tikzpicture}
\caption{The halfline $H_{\Omega_T}$}
\label{fig:bg}
  	\end{subfigure}
    
    \caption{The set $\Omega_T$ and the corresponding halfline $H_{\Omega_T}$.}
    \label{fig:Homegas}
    \end{figure}
    
It is easy to notice, that for $T>1$ the Cheeger set of $\Omega$ is given by the halfline $(-\infty, -1)$. The halfline of same volume of $\Omega$ is
\[
H_{\Omega_T} = (-\infty, -1+\epsilon(T)),
\]
where $\epsilon(T)$ is such that
\begin{equation}\label{eq:e(T)}
 \int_T^{+\infty}e^{-\frac{t^2}{2}}\mathrm{d}t = \int_{-1}^{-1+\epsilon(T)}e^{-\frac{t^2}{2}}\mathrm{d}t,
\end{equation}
and obviously $\epsilon(T)\to 0$, as $T\to +\infty$. For the convenience  of the reader, $\Omega_T$ is depicted in Figure~\ref{fig:ag}, while the corresponding halfline $H_{\Omega_T}$ in Figure~\ref{fig:bg}. By recalling the definition of the function $\varphi$ in Lemma \ref{calc}, we have by the Mean Value Theorem
\[
h_\gamma(\Omega) - h_\gamma(H_\Omega) = \frac{P_\gamma((-\infty, -1))}{\gamma((-\infty, -1))} - \frac{P_\gamma((-\infty, -1+\epsilon(T)))}{\gamma((-\infty, -1+\epsilon(T)))} = -\varphi'(\xi)\epsilon(T),
\]
for some $\xi \in (-1, -1+\epsilon(T))$. Hence, we immediately obtain the upper bound
\begin{equation}\label{eq:failure_gauss_upper}
h_\gamma(\Omega) - h_\gamma(H_\Omega) \le C\, \epsilon(T),
\end{equation}
by choosing $T>>1$ in such a way that $\epsilon(T)<1$ and $C=\min_{[-1,0]}\{-\varphi'(x)\}$ independent of $\epsilon$. We now aim to bound $\beta_\gamma(\Omega)$ from below and show that
\begin{equation}\label{eq:comb0}
\frac{\beta_\gamma(\Omega)}{1+ \sqrt{|\log(\beta_\gamma(\Omega))|}} \geq c \, \epsilon(T).
\end{equation}
This inequality and inequality~\eqref{eq:failure_gauss_upper} immediately show that Theorem~\ref{thm:gauss_logbeta} is sharp. Let us prove inequality~\eqref{eq:comb0}.\par 

By definition of $\beta_\gamma(\Omega)$, we have
\begin{equation}\label{eq:comb1}
\beta_\gamma(\Omega) = |b(H_\Omega)-b(\Omega)| 
= \frac{1}{\sqrt{2\pi}}\left(e^{-\frac{(\epsilon(T)-1)^2}{2}} -e^{-\frac 12} + e^{-\frac{T^2}{2}}\right).
\end{equation}
We now use~\eqref{eq:e(T)} to bound this quantity from below as follows. On the one hand, for the LHS of~\eqref{eq:e(T)}, one has
\begin{equation*}
\int_T^{+\infty}e^{-\frac{t^2}{2}}\mathrm{d}t = \sqrt{2}\int_{\frac{T}{\sqrt{2}}}^{+\infty} e^{-x^2}\mathrm{d}x = \sqrt{\frac{\pi}{2}} \erfc\left(\frac{T}{\sqrt{2}}\right) \le \frac 1T e^{-\frac{T^2}{2}} ,
\end{equation*}
where we used the asymptotic behavior of $\erfc$ given in~\eqref{eq:Taylor_erfc}. On the other hand, for the RHS of~\eqref{eq:e(T)}, one has
\[
\int_{-1}^{-1+\epsilon(T)}e^{-\frac{t^2}{2}}\mathrm{d}t \ge \int_{-1}^{-1+\epsilon(T)} -te^{-\frac{t^2}{2}}\mathrm{d}t = e^{-\frac{t^2}{2}}\Big|_{-1}^{-1+\epsilon(T)} = e^{-\frac{(\epsilon(T)-1)^2}{2}} -e^{-\frac 12}.
\]
Combining these two inequalities, we get
\begin{equation}\label{eq:comb2}
e^{-\frac{T^2}{2}} \ge  T\left(e^{-\frac{(\epsilon(T)-1)^2}{2}} -e^{-\frac 12}\right).
\end{equation}
Thus by~\eqref{eq:comb1} and~\eqref{eq:comb2} we get
\begin{equation}\label{eq:comb3}
\beta_\gamma(\Omega) \ge \frac{1}{\sqrt{2\pi}}(1+T)\left(e^{-\frac{(\epsilon(T)-1)^2}{2}} -e^{-\frac 12}\right) \geq c\, (1+T)\, \epsilon(T) .
\end{equation}

By using again the asymptotic behavior of $\erfc$ given in~\eqref{eq:Taylor_erfc} we may estimate the LHS of~\eqref{eq:e(T)} as
\[
\int_T^{+\infty}e^{-\frac{t^2}{2}}\mathrm{d}t \geq \frac{1}{2T} e^{-\frac{T^2}{2}},
\]
and we simply estimate the RHS of~\eqref{eq:e(T)} as
\[
\int_{-1}^{-1+\epsilon(T)}e^{-\frac{t^2}{2}}\mathrm{d}t \leq 2 \int_{-1}^{-1+\epsilon(T)} -te^{-\frac{t^2}{2}}\mathrm{d}t =2 \left(e^{-\frac{(\epsilon(T)-1)^2}{2}} -e^{-\frac 12}\right)
\]
for  $T>>1$. Hence, we deduce by~\eqref{eq:e(T)}
\[
e^{-\frac{T^2}{2}} \leq 4  T\left(e^{-\frac{(\epsilon(T)-1)^2}{2}} -e^{-\frac 12}\right) \leq c\, T\, \epsilon(T)
\]
for $T>>1$. Combining this last inequality with inequality~\eqref{eq:comb3} yields
\begin{equation}\label{eq:comb4}
\beta_\gamma(\Omega)\ge c(1+T)\e(T) \ge c T\e(T) \ge c \, e^{-\frac{T^2}{2}} \ge e^{-T^2},
\end{equation}
for T big enough. Recalling that $\beta_\gamma(\Omega)<1$ for $T>>1$ by~\eqref{eq:comb4} we get
\[
\sqrt{|\log (\beta_\g(\Omega))|}\le T,
\]
for $T$ big enough. This and inequality~\eqref{eq:comb3} imply the inequality~\eqref{eq:comb0}. \par

Counterexamples in higher dimensions can be constructed in the same exact way. Notice that in higher dimensions, one can as well provide P-connected counterexamples, by adding a suitably thin tube connecting the two halfspaces defining $\Omega_T$.

\section*{Acknowledgements}

The present research was carried out during a visit of G.~S.~at the University of Jyv\"askyl\"a.  G.~S.~wishes to thank the hosting institution for the kind hospitality and the INdAM institute of which he is a member and which funded his stay in Jyv\"askyl\"a (grant n.~U-UFMBAZ-2018-000928 03-08-2018). G.~S.~was also partially supported by the INdAM--GNAMPA 2019 project ``Problemi isoperimetrici in spazi Euclidei e non'' (grant n.~U-UFMBAZ-2019-000473 11-03-2019).  V.~J.~was supported by the Academy of Finland grant 314227.   
    
\bibliographystyle{plainurl}

\bibliography{Cheeger_quantitative}

\end{document}